%
%
%

\documentclass[11pt,reqno,british,a4paper,twoside]{amsart}



\usepackage[utf8]{inputenc}
\usepackage{lmodern}
\usepackage[T1]{fontenc}
\DeclareMathAlphabet{\mathsl}{OT1}{lmr}{m}{sl}
\DeclareMathSymbol{\leqslant}{\mathrel}{symbols}{"AC}
\DeclareMathSymbol{\geqslant}{\mathrel}{symbols}{"AD}

\usepackage{babel,mathtools,paralist,xspace,stmaryrd,url}
\usepackage{booktabs,array,fixltx2e,fancyhdr,csquotes}
\usepackage[babel=true]{microtype}
\usepackage[margin=30pt,font=small]{caption}

\rmfamily

\mathtoolsset{mathic=true}
\setdefaultenum{\upshape (i)}{}{}{}

\theoremstyle{plain}
\newtheorem{theorem}{Theorem}[section]
\newtheorem{proposition}[theorem]{Proposition}
\newtheorem{lemma}[theorem]{Lemma}
\newtheorem{corollary}[theorem]{Corollary}

\theoremstyle{definition}

\theoremstyle{remark}
\newtheorem{remark}[theorem]{Remark}

\numberwithin{equation}{section}

\newcommand{\Lie}[1]{{\mathsl{#1}}}
\newcommand{\lie}[1]{{\mathfrak{#1}}}

\newcommand{\SO}{\Lie{SO}}
\newcommand{\so}{\lie{so}}

\newcommand{\Sp}{\Lie{Sp}}
\newcommand{\sP}{\lie{sp}}
\newcommand{\SU}{\Lie{SU}}
\newcommand{\su}{\lie{su}}
\newcommand{\Un}{\Lie{U}}
\newcommand{\un}{\lie{u}}

\newcommand{\bC}{{\mathbb C}}

\newcommand{\bR}{{\mathbb R}}

\DeclareMathOperator{\CP}{\bC P}
\DeclareMathOperator{\End}{End}
\DeclareMathOperator{\Gro}{\widetilde{Gr}}
\DeclareMathOperator{\Id}{Id}

\newcommand{\cyc}{\mathop{\hbox{\larger\(\mathfrak{S}\)}}}

\newcommand{\NB}{\nabla}
\newcommand{\LC}{\nabla^{\mathrm{LC}}}
\newcommand{\cQ}{\mathcal{Q}}

\DeclarePairedDelimiterX{\inp}[2]{\langle}{\rangle}{#1, #2}

\DeclarePairedDelimiter{\base}{\{}{\}}
\DeclarePairedDelimiterX{\Set}[2]{\{}{\}}{\, #1 \,\delimsize\vert\, #2 \,}
\DeclarePairedDelimiter{\rcomp}{\llbracket}{\rrbracket}
\DeclarePairedDelimiter{\real}{\lbrack}{\rbrack}

\newcommand{\eqbreak}[1][2]{\\&\hskip#1em}
\newcommand{\hyphen}{\nobreak-\nobreak\hspace{0pt}}

\begin{document}

\title[Quaternion Geometries on Twistor Space]{Quaternion Geometries
on the Twistor Space of the Six-Sphere} 

\author{Francisco Martín Cabrera}
\address[F.~Martín Cabrera]{Department of Fundamental Mathematics\\
University of La Laguna\\ 38200 La Laguna\\ Tenerife\\ Spain}
\email{fmartin@ull.es}

\author{Andrew Swann}
\address[A.\,F.~Swann]{Department of Mathematics\\
Aarhus University\\
Ny Munkegade 118\\ Bldg
  1530\\ DK-8000 Aarhus C\\ Denmark \textit{and}
  CP\textsuperscript3-Origins Centre of Excellence for Cosmology and
  Particle Physics Phenomenology\\ University of Southern Denmark\\
  Campusvej 55\\ DK-5230 Odense M\\ Denmark}
\email{swann@imf.au.dk}


\begin{abstract}
  We explicitly describe all \( \SO(7) \)-invariant almost
  quaternion\hyphen Hermitian structures on the twistor space of the
  six sphere and determine the types of their intrinsic torsion.
\end{abstract}

\subjclass[2010]{Primary 53C26; Secondary 53C10, 53C30}

\maketitle

\begin{center}
  \begin{minipage}{0.7\linewidth}
    \small
    \tableofcontents
  \end{minipage}
\end{center}

\section{Introduction}
\label{sec:introduction}
Recently Moroianu, Pilca and Semmelmann~\cite{Moroianu-PS:hom-aqH}
found that the twistor space \( M = \SO(7)/{\Un(3)} \) of the six
sphere~\( S^6 \) admits a homogeneous almost quaternion\hyphen
Hermitian structure.  This arose as part of their striking result that
\( M \) is the only such homogeneous space with non-zero Euler
characteristic that is neither quaternionic Kähler (the quaternionic
symmetric spaces of Wolf~\cite{Wolf:quaternionic}) nor~\( S^2\times
S^2 \).

In this paper we show that there is exactly a one-dimensional family
of invariant almost quaternion\hyphen Hermitian structures on \( M \),
with fixed volume, and determine the types of their intrinsic torsion.
We will see that the family contains inequivalent structures, and
includes the symmetric Kähler metric of the quadric \( \Gro_2(\bR^6) =
\SO(8)/{\SO(2)\SO(6)} \).  Each member of the family will be shown to
have almost quaternion\hyphen Hermitian type \( \Lambda^3_0E(S^3H + H)
\) with the first component non-zero, confirming that they are not
quaternionic Kähler; one member of the family has pure type \(
\Lambda^3_0ES^3H \), and this is the first known example of such a
geometry.  However, the structure singled out by this almost
quaternionic\hyphen Hermitian intrinsic torsion is not the Kähler
metric of the quadric nor the squashed Einstein metric in the
canonical variation.

\subsubsection*{Acknowledgements.} This work is partially supported by
the Danish Council for Independent Research, Natural Sciences and
MICINN (Spain) Grant MTM2009-13383.  We thank the authors of
\cite{Moroianu-PS:hom-aqH} for informing us of their work and in
particular Andrei Moroianu for useful conversations.

\section{Invariant forms}

The subgroup \( \Un(3) \) of \( \SO(7) \) arises from a choice of
identification of \( \bR^7 \) as \( \bR \oplus \bC^3 \).  Regarding \(
\Un(3) \) as \( \Un(1)\SU(3) \), we may write \( \bC^3 = \bR^6 =
\rcomp{L\lambda^{1,0}} \), meaning that \( \bR^6 \otimes \bC =
L\lambda^{1,0} + \overline{L\lambda^{1,0}} \cong L\lambda^{1,0} +
L^{-1}\lambda^{0,1} \), where \( L = \bC \) and \( \lambda^{1,0} =
\bC^3 \) as the standard representations of \( \Un(1) \) and \( \SU(3)
\), respectively.  We thus have \( \Un(3) \leqslant \SO(6) \leqslant
\SO(7) \), so \( M = \SO(7)/{\Un(3)} \) fibres over \( S^6 =
\SO(7)/{\SO(6)} \) with fibre \( \SO(6)/{\Un(3)} \), the almost
complex structures on~\( T_xS^6 \).  Thus \( M \) is the (Riemannian)
twistor space of~\( S^6 \).

Since \( \lambda^{3,0} = \Lambda^3\lambda^{1,0} = \bC \) is trivial,
we have \( \lambda^{2,0}\cong \lambda^{0,1} \) as \( \SU(3)
\)-modules.  The Lie algebra of \( \SO(7) \) now decomposes as
\begin{equation*}
  \begin{split}
    \so(7) &= \Lambda^2\bR^7
    = \Lambda^2(\bR + \rcomp{L\lambda^{1,0}})
    = \rcomp{L\lambda^{1,0}} + \rcomp{L^2\lambda^{2,0}} +
    \real{\lambda^{1,1}} \\ 
    &\cong \rcomp{L\lambda^{1,0}} + \rcomp{L^2\lambda^{0,1}} + \un(1)
    + \su(3). 
  \end{split}
\end{equation*}
Here \( [\lambda^{1,1}] \) is the real module whose complexification
is \( \lambda^{1,1} = \lambda^{1,0} \otimes \lambda^{0,1} \); it
splits in to two irreducible modules \( [\lambda^{1,1}_0] \cong \su(3)
\) and \( \bR = \un(1) \).

We thus have that the complexified tangent space of \( M =
\SO(7)/{\Un(3)} \) is the bundle associated to
\begin{equation}
  \label{eq:TC}
  \begin{split}
    T \otimes \bC
    &= \bigl(\rcomp{L\lambda^{1,0}} + \rcomp{L^2\lambda^{0,1}}\bigr)
    \otimes \bC\\
    &= L\lambda^{1,0} + L^{-1}\lambda^{0,1} + L^2\lambda^{0,1} +
    L^{-2}\lambda^{1,0}\\
    &= (L^{1/2}\lambda^{0,1} +
    L^{-1/2}\lambda^{1,0})(L^{3/2}+L^{-3/2}). 
  \end{split}
\end{equation}
This allows us to write \( T \otimes \bC = EH \), where \( E =
L^{1/2}\lambda^{0,1} + L^{-1/2}\lambda^{1,0} \) and \( H =
L^{3/2}+L^{-3/2} \) are representations of \( \Un(1)_2\times \SU(3) \)
as a subgroup of \( \Un(1)_L\SU(3) \times \Un(1)_R \leqslant \Sp(3)
\times \Sp(1) \).  Here \( \Un(1)_2 \) is a double cover of \( \Un(1)
\) and is included in \( \Un(1)_L \times \Un(1)_R \) via the map \(
e^{i\theta} \mapsto (e^{-i\theta},e^{3i\theta}) \).  In this way, we
see that \( M = \SO(7)/{\Un(3)} \) carries an invariant \(
\Sp(3)\Sp(1) \)-structure, where \( \Sp(3)\Sp(1) = (\Sp(3) \times
\Sp(1)) / \{\pm(1,1)\} \).  This is the \( G \)-structure description
of an almost quaternion\hyphen Hermitian structure.

Geometrically an almost quaternion\hyphen Hermitian structure is
specified by a Riemannian metric \( g \) and a three-dimensional
subbundle \( \mathcal G \) of \( \End(TM) \) which locally has a basis
\( I \), \( J \), \( K \) satisfying the quaternion identities
\begin{equation*}
  I^2 = -1 = J^2,\qquad IJ = K = - JI
\end{equation*}
and the compatibility conditions
\begin{equation*}
  g(I\cdot,I\cdot) = g(\cdot,\cdot) = g(J\cdot,J\cdot).
\end{equation*}
There are then local two-forms
\begin{gather*}
  \omega_I(X,Y) = g(X,IY),\quad
  \omega_J(X,Y) = g(X,JY),\\
  \omega_K(X,Y) = g(X,KY)
\end{gather*}
and with the local form \( \omega_c = \omega_J +i\omega_K \) of type
\( (2,0) \) with respect to~\( I \).  Since they are non-degenerate,
the local forms \( \omega_I \), \( \omega_J \), \( \omega_K \) are
sufficient to determine the local almost complex structures \( I \),
\( J \) and \( K \) and the metric~\( g \).

Equation~\eqref{eq:TC}, show us that \( T \) has two inequivalent
irreducible summands \( \rcomp{L\lambda^{1,0}} \) and \(
\rcomp{L^2\lambda^{0,1}} \) and so there are two invariant forms \(
\omega_0 \) and \( \tilde \omega_0 \) spanning \( \Omega^2(M)^{\SO(7)}
\).  However, we have that
\begin{equation}
  \label{eq:L2T}
  \begin{split}
    \Lambda^2T &= \Lambda^2\rcomp{L\lambda^{1,0}} +
    \Lambda^2\rcomp{L^2\lambda^{0,1}} +
    \rcomp{L\lambda^{1,0}}\wedge\rcomp{L^2\lambda^{0,1}}\\
    &= (\bR\omega_0 + \real{\lambda^{1,1}_0} + \rcomp{L^2\lambda^{0,1}})
    + (\bR\tilde\omega_0 + \real{\lambda^{1,1}_0} +
    \rcomp{L^4\lambda^{1,0}}) \eqbreak
    + (\rcomp{L^3} + \rcomp{L^3}\real{\lambda^{1,1}_0} +
    \rcomp{L\lambda^{1,0}} + \rcomp{L\sigma^{0,2}}),
  \end{split}
\end{equation}
where \( \sigma^{0,2} = S^2\lambda^{0,1} \).  There is thus an
addition \( 2 \)-dimensional subspace~\( \rcomp{L^3} \) preserved by
the \( \SU(3) \)-action.  This space is spanned by local \( \SU(3)
\)-invariant forms \( \omega_J \) and \( \omega_K \), that are mixed
under the \( \Un(1) \)-action, so that \( \omega_c = \omega_J +
i\omega_K \) is a basis element of~\( L^3 \).  We may now consider the
triple of forms
\begin{equation}
  \label{eq:forms}
  \omega_I = \lambda\omega_0 + \mu\tilde\omega_0,\quad
  \omega_J\quad\text{and}\quad \omega_K
\end{equation}
which will be seen to result in an almost quaternion\hyphen Hermitian
structure when
\begin{equation}
  \label{eq:normalisation}
  20\lambda^3\mu^3(\omega_0)^3(\tilde\omega_0)^3 = (\omega_J)^6.
\end{equation}
This equation is necessary, as each two form in the triple must define
the same volume element.

We note that for an almost quaternion\hyphen Hermitian structure the
four-form \( \Omega = \omega_I^2 + \omega_J^2 + \omega_K^2 \) is
globally defined.  For an invariant structure, this form must lie in
\( \Omega^4(M)^{\SO(7)} \) which in our particular case is
four-dimensional.  Indeed the complete decomposition of \( \Lambda^4T \) in
to irreducible \( \Un(3) \)-modules is
\begin{equation*}
  \begin{split}
    \Lambda^4T &= \rcomp{L^6} +2\rcomp{L^3} + 4\bR \eqbreak +
    \rcomp{L^7\lambda^{1,0}} + 3\rcomp{L^4\lambda^{1,0}} +
    5\rcomp{L\lambda^{1,0}} + 4\rcomp{L^2\lambda^{0,1}} +
    2\rcomp{L^5\lambda^{0,1}} \eqbreak + 2\rcomp{L^2\sigma^{2,0}} +
    2\rcomp{L\sigma^{0,2}} + \rcomp{L^4\sigma^{0,2}} \eqbreak +
    \rcomp{L^3\sigma^{3,0}} + \rcomp{\sigma^{3,0}} +
    \rcomp{L^3\sigma^{0,3}} \eqbreak + \rcomp{L^6\lambda^{1,1}_0} +
    4\rcomp{L^3\lambda^{1,1}_0} + 6\real{\lambda^{1,1}_0} \eqbreak +
    \rcomp{L^4\sigma^{2,1}_0} + 2\rcomp{L^2\sigma^{2,1}_0} +
    \rcomp{L^2\sigma^{1,2}_0} + \rcomp{\sigma^{2,2}_0}.
  \end{split}
\end{equation*}
Now the four-forms \( \omega_0^2 \), \( \tilde\omega_0^2 \), \(
\omega_0\wedge\tilde\omega_0 \) and \( \omega_J^2+\omega_K^2 \) are
invariant and linearly independent, so they provide a basis for~\(
\Omega^4(M)^{\SO(7)} \).  It follows, Lemma~\ref{lem:all-aqH} below, that
any invariant almost hyperHermitian structure on~\( M \) is described
via the forms of~\eqref{eq:forms}.

\section{Intrinsic torsion}
\label{sec:intrinsic-torsion}

Given an invariant almost Hermitian structure on~\( M \), there is a
unique \( \Sp(3)\Sp(1) \)-connection \( \NB \) characterised by the
condition that the pointwise norm of its torsion is the least
possible.  More precisely, \( \NB \) is related to the Levi-Civita
connection by
\begin{equation*}
  \NB = \LC + \xi,
\end{equation*}
where \( \xi \)~is the intrinsic torsion
given~\cite{Cabrera-S:aqH-torsion} by
\begin{equation*}
  \xi_X Y = - \tfrac14 \sum_{A=I,J,K} A (\LC_X A) Y + \tfrac12
  \sum_{A=I,J,K} \lambda_A(X) A Y,  
\end{equation*}
with
\begin{equation*}
  6\lambda_I(X) = g(\LC_X\omega_J,\omega_K),
\end{equation*}
etc.  The tensor~\( \xi \) takes values in
\begin{equation*}
  \cQ = T^* \otimes (\sP(3) + \sP(1))^\bot \subset T^* \otimes \Lambda^2
  T^* 
\end{equation*}
where \( \sP(3) = \real{S^2E} \) and \( \sP(1) = \real{S^2H} \) are
the Lie algebras of \( \Sp(3) \) and \( \Sp(1) \).  Under the action
of \( \Sp(3)\Sp(1) \), the space \( \cQ \otimes \bC \) decomposes as
\begin{equation*}
  \cQ \otimes \bC = (\Lambda^3_0E + K + E)(S^3H + H)
\end{equation*}
with \( \Lambda^3_0E \) and \( K \) irreducible \( \Sp(3) \)-modules
satisfying \( \Lambda^3E = \Lambda^3_0E + E \) and \( E\otimes S^2E =
S^3E + K + E \).  The space \( \cQ \) thus has six irreducible
summands under \( \Sp(3)\Sp(1) \).

For an invariant structure on \( M = \SO(7)/\Un(3) \), the intrinsic
torsion lies in a \( \Un(3) \)-invariant submodule of~\( \cQ \).  As
\( \sP(3) = \real{S^2(L^{1/2}\lambda^{0,1})} =
\rcomp{L\sigma^{0,2}} + [\lambda^{1,1}_0] + \bR \) and \( \sP(1) =
\real{S^2(L^{3/2})} = \rcomp{L^3} + \bR \), equation~\eqref{eq:L2T},
implies that 
\begin{equation*}
  (\sP(3)+\sP(1))^\bot \cong \real{\lambda^{1,1}_0} +
  \rcomp{L^2\lambda^{0,1}} + \rcomp{L^4\lambda^{1,0}} +
  \rcomp{L^3}\real{\lambda^{1,1}_0} + \rcomp{L\lambda^{1,0}}.
\end{equation*}
Comparing with equation~\eqref{eq:TC}, we see that \(
(\sP(3)+\sP(1))^\bot \) contains a unique copy of each of the
irreducible summands of~\( T \), so \( \cQ^{\Un(3)} \) is two
dimensional.  As \( \Lambda^3(A+B) \cong \Lambda^3A + \Lambda^2A\otimes B
+ A\otimes \Lambda^2B + \Lambda^3B \), we find that
\begin{equation*}
  \Lambda^3_0E = (L^{3/2} + L^{-3/2}) + (L^{1/2}\sigma^{2,0}+L^{-1/2}\sigma^{0,2}).
\end{equation*}
The first summand is a copy of \( H \) and is also a submodule of \(
S^3H = L^{9/2} + L^{3/2} + L^{-3/2} + L^{-9/2} \).  This shows that \(
\real{\Lambda^3_0ES^3H}^{\Un(3)} \) and \(
\real{\Lambda^3_0EH}^{\Un(3)} \) are each one-dimensional, and so we
have
\begin{equation}
  \label{eq:invxi}
  \xi \in \cQ^{\Un(3)} \subset \real{\Lambda^3_0ES^3H} +
  \real{\Lambda^3_0EH}. 
\end{equation}

\section{Explicit structures}
\label{sec:explicit}

We now wish to determine the components of \( \xi \) in each of the
summands of~\eqref{eq:invxi}.  An invariant almost Hermitian structure
on~\( M \), may be described by two-forms as in~\eqref{eq:forms}.  As
\( \omega_J \) and \( \omega_K \) are only invariant under \( \SU(3)
\), they do not define global forms on~\( M \).  However, we do get
two such invariant forms on the total space of the circle bundle \( N
= \SO(7)/\SU(3) \to M = \SO(7)/\Un(3) \).

Let \( 0, 1, 2, 3, 1', 2', 3' \) be an orthonormal basis for \( \bR^7
= \bR + \bC^3 \), with \( 0 \in \bR \) and \( i1=1' \), etc.  Writing
\( 12 \) for \( 1 \wedge 2 \), a standard basis for \(
\rcomp{L\lambda^{1,0}} \subset \so(7) \) is given by
\begin{equation*}
  A = 01,\quad B = 02,\quad C = 03,\quad A' = 01',\quad B' = 02',\quad
  C' = 03'
\end{equation*}
and a corresponding basis for \( \rcomp{L^2\lambda^{0,1}} \) is
\begin{alignat*}{3}
  P &= 23-2'3',\quad& Q &= 31-3'1',\quad& R &= 12-1'2', \\
  P' &= 23'-32',& Q' &= 31'-13',& R' &= 12'-21'.
\end{alignat*}
We put \( E = 11'+22'+33' \), and note that this is a generator of the
central \( \un(1) \) in \( \un(3) \).  Then \( \base{E,A,\dots,R'} \)
is a basis for \( \lie n = T_{\Id\SU(3)}N \) and \( \base{A,\dots,R'}
\) is a basis for~\( \lie m = T_{\Id\Un(3)}M \).  We use lower case
letters to denote the corresponding dual bases of~\( \lie n^* \) and
\( \lie m^* \).  These give left-invariant one-forms on \( \SO(7) \),
with \( da(X,Y) = - a([X,Y]) \) for \( X,Y \in \so(7) \), etc.  We write
\begin{equation*}
  d_Na = (da)|_{\Lambda^2\lie n} \quad\text{and}\quad
  d_Ma = (da)|_{\Lambda^2\lie m} 
\end{equation*}
at \( \Id \in \SO(7) \).  For a left-invariant form \( \alpha \in
\Omega^k(\SO(7)) \), we have at~\( \Id \in \SO(7) \) that \( d\alpha =
d_N\alpha \) if \( \alpha \) is right \( \SU(3) \)-invariant and \(
d\alpha = d_M\alpha \) if \( \alpha \) is right \( \Un(3)
\)-invariant.  For our choice of bases, we have
\begin{align*}
  d_Ma &= - b\wedge r + c\wedge q - b' \wedge r' + c'\wedge q',&
  d_Mp &= - \tfrac12(b\wedge c - b' \wedge c'),\\
  d_Ma' &= - b\wedge r' + c\wedge q' + b' \wedge r - c'\wedge q,&
  d_Mp' &= - \tfrac12(b\wedge c' + b' \wedge c)
\end{align*}
with the other derivatives  obtained by applying the cyclic
permutation \( (a,a',p,p') \to (b,b',q,q') \to (c,c',r,r') \to
(a,a',p,p') \).  We use \( \cyc \) to denote sums over
this group of permutations.

The two-form \( \omega_I \) of~\eqref{eq:forms} is
\begin{equation*}
  \begin{split}
    \omega_I &= \lambda(a'\wedge a + b'\wedge b + c'\wedge c) +
    \mu(p'\wedge p + q'\wedge q + r'\wedge r) \\
    &= \cyc(\lambda a' \wedge a + \mu p' \wedge p).
  \end{split}
\end{equation*}
On \( N \), we have the forms \( \hat\omega_J \) and \(
\hat\omega_K \) given by
\begin{equation*}
  \hat\omega_J + i \hat\omega_K = \cyc \bigl((p+ip')\wedge (a+ia')\bigr).
\end{equation*}
Choosing a local section \( s \) of \( \pi\colon N \to M \) 
such that \( s(\Id\Un(3)) = \Id\SU(3) \) and \( s^*e = 0 \), we then obtain
local two-forms
\begin{equation*}
 \omega_J = s^*\hat\omega_J,\quad \omega_K = s^*\hat\omega_K
\end{equation*}
completing the triple of~\eqref{eq:forms}.  The corresponding metric
on~\( M \) is
\begin{equation}
  \label{eq:g}
  g = \cyc(\lambda(a^2 + {a'}^2) + \mu(p^2 + {p'}^2))
\end{equation}
and condition~\eqref{eq:normalisation} is simply
\begin{equation}
  \label{eq:norm}
  \lambda\mu = 1.
\end{equation}
These are the only invariant metrics on~\( M \) with normalised volume
form, since \( TM \)~\eqref{eq:TC} has exactly two irreducible
summands.

At \( \Id\Un(3) \), the almost complex structures satisfy
\begin{gather*}
  IA = A',\quad IP = P',\quad
  J\tfrac1{\sqrt\lambda}A = \tfrac1{\sqrt\mu}P,\quad
  J\tfrac1{\sqrt\lambda}A' = -\tfrac1{\sqrt\mu}P',\\
    K\tfrac1{\sqrt\lambda}A = \tfrac1{\sqrt\mu}P',\quad
  K\tfrac1{\sqrt\lambda}A' = \tfrac1{\sqrt\mu}P.
\end{gather*}
These act on forms via \( Ia = -a(I\cdot) \), so with the
normalisation condition~\eqref{eq:norm}, we have \( Ja = \mu p \), \(
Jp = -\lambda a \), etc.

\begin{lemma}
  \label{lem:all-aqH}
  These describe all invariant almost quaternion\hyphen Hermitian
  structures on~\( M \) with normalised volume form.
\end{lemma}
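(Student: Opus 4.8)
The plan is to reduce the classification to linear algebra on the isotropy module \( \lie m = T_{\Id\Un(3)}M \) and then to enumerate the \( \Un(3) \)-invariant data using the decompositions already computed. An \( \SO(7) \)-invariant almost quaternion-Hermitian structure is the same as a \( \Un(3) \)-invariant metric \( g \) on \( \lie m \) together with a \( \Un(3) \)-invariant rank-three subspace \( \mathcal G \subset \End(\lie m) \) admitting a quaternionic frame compatible with \( g \); by homogeneity it suffices to specify this data at \( \Id\Un(3) \) and to verify the pointwise conditions there. Since \( T \) has by \eqref{eq:TC} exactly two inequivalent irreducible real summands, \( \rcomp{L\lambda^{1,0}} \) and \( \rcomp{L^2\lambda^{0,1}} \), Schur's lemma shows the invariant metrics form the two-parameter family \eqref{eq:g}, and imposing the normalised volume reduces this to the one-parameter family \( \lambda\mu=1 \) of \eqref{eq:norm}.

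The crucial step is to determine the possible invariant subspaces \( \mathcal G \). Using the metric to identify skew-symmetric endomorphisms with two-forms, \( \mathcal G \) corresponds to a three-dimensional \( \Un(3) \)-invariant subspace of \( \Lambda^2\lie m^* \), and this correspondence is equivariant for every invariant \( g \). I would read off from \eqref{eq:L2T} that the only trivial (real one-dimensional) summands of \( \Lambda^2\lie m^* \) are \( \bR\omega_0 \) and \( \bR\tilde\omega_0 \), and that the only two-dimensional irreducible on which \( \SU(3) \) acts trivially is \( \rcomp{L^3} = \langle\omega_J,\omega_K\rangle \); every remaining summand is either \( \SU(3) \)-nontrivial or of real dimension greater than two. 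Hence any three-dimensional invariant subspace must be \( \langle a\omega_0 + b\tilde\omega_0\rangle \oplus \rcomp{L^3} \) for some \( (a,b)\neq(0,0) \), so \( \mathcal G \) is automatically spanned by a form of the shape \( \omega_I \) in \eqref{eq:forms} together with \( \omega_J \) and \( \omega_K \).

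It remains to impose quaternionic compatibility and fix the coefficients. The form \( \omega_I = a\omega_0 + b\tilde\omega_0 \) is invariant, so the endomorphism \( I \) it defines through \( g \) is invariant and therefore preserves each irreducible summand of \( \lie m \); comparing \( g \) and \( \omega_I \) summand by summand, the condition \( I^2=-1 \) forces \( a=\lambda \) and \( b=\mu \) up to sign, matching the metric parameters exactly as in \eqref{eq:forms}. The analogous condition \( J^2=-1 \) fixes the scale of \( \omega_J,\omega_K \) relative to \( g \); this is precisely the requirement that all three two-forms define the same volume element, namely \eqref{eq:normalisation}, which becomes trivial once \( \lambda\mu=1 \). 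Finally the full relations \( IJ=K=-JI \) need only be verified at \( \Id\Un(3) \), which is the explicit computation recorded just before the lemma. I expect the representation-theoretic bookkeeping of the second paragraph to be the main obstacle: one must be sure that no exotic three-dimensional invariant subspace of \( \Lambda^2\lie m^* \) arises by combining a trivial summand with part of a larger module, and this is ruled out only by checking against the complete list of irreducibles in \eqref{eq:L2T} that the dimensions and \( \SU(3) \)-weights cannot conspire otherwise.
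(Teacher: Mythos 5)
Your proposal is correct and follows essentially the same route as the paper: Schur's lemma for the invariant metrics, the dimension count in \eqref{eq:L2T} forcing any three-dimensional invariant subspace of \( \Lambda^2\lie m^* \) to be a line in \( \bR\omega_0+\bR\tilde\omega_0 \) together with all of \( \rcomp{L^3} \), and then the quaternion relations determining \( \omega_I \). One small caveat: \( I^2=-1 \) alone only gives \( a=\pm\lambda \), \( b=\pm\mu \) with \emph{independent} signs, so it is really the relation \( I=JK \) (your final check of \( IJ=K=-JI \), which is how the paper phrases it) that fixes the relative sign and hence \( \omega_I \) up to an overall sign.
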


\begin{proof}
  We have noted above that \eqref{eq:g} gives all the invariant
  metrics.  Now the local almost complex structures, or equivalently
  their Hermitian two forms, associated to the almost quaternion
  Hermitian structure span a \( \Un(3) \)-invariant subspace~\( V \)
  of \( \Lambda^2T \) of dimension~\( 3 \).  Counting dimensions in
  the decomposition~\eqref{eq:L2T}, shows that \( V \) is a subspace
  of \( \bR\omega_0 + \bR\tilde\omega_0 + \rcomp{L^3} \).  In
  particular, \( V \cap \rcomp{L^3} \) is at least one-dimensional; \(
  \Un(3) \)-invariance implies that \( \rcomp{L^3} \leqslant V \).  As
  \( \omega_J \) and \( \omega_K \) are \( g \)-orthogonal of the same
  length for each normalised \( g \) in~\eqref{eq:g}, we see that \( J
  \) and \( K \) are local almost complex structures belonging to the
  almost quaternion\hyphen Hermitian geometry.  Finally, \( I = JK \)
  is specified too.
\end{proof}

\begin{lemma}
  \label{lem:d-om}
  For the choices of \( \omega_I \), \( \omega_J \) and \( \omega_K \)
  above normalised by~\eqref{eq:norm} we have at the base point \(
  \Id\Un(3) \in M \) that
  \begin{gather*}
    Id\omega_I = Id_M\omega_I =  (\tfrac12\mu - 2\lambda)\Phi,\\
    Jd\omega_J = 2\lambda\Phi - \tfrac12\mu^3\Psi,\quad Kd\omega_K =
    2\lambda\Phi + \tfrac12\mu^3\Psi,
  \end{gather*}
  where
  \begin{gather*}
    \Phi = \cyc (a\wedge b\wedge r - a' \wedge b' \wedge r + a \wedge
    b' \wedge r' + a' \wedge b \wedge r'),\\
    \Psi = \cyc(p \wedge q \wedge r - 3 p \wedge q' \wedge r')
  \end{gather*}
  and \( A d\omega_A(\cdot,\cdot,\cdot) = -
  d\omega_A(A\cdot,A\cdot,A\cdot) \), for \( A = I, J, K \).
\end{lemma}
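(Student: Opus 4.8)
The plan is to compute the exterior derivatives of the three Hermitian two-forms directly in terms of the left-invariant coframe, using the Maurer--Cartan formula $d\alpha(X,Y)=-\alpha([X,Y])$ together with the structure equations already recorded for $d_Ma$, $d_Mp$ and their cyclic images. For $\omega_I=\cyc(\lambda\,a'\wedge a+\mu\,p'\wedge p)$, which is globally defined and right $\Un(3)$-invariant, we have $d\omega_I=d_M\omega_I$ at the base point, so I would expand $d(a'\wedge a)=da'\wedge a-a'\wedge da$ and $d(p'\wedge p)$ using the listed derivatives, then collect the resulting monomials and compare with $\Phi$. The coefficient $\tfrac12\mu-2\lambda$ should emerge once the $\lambda$-terms (from the $a'\wedge a$ part) and the $\mu$-terms (from $p'\wedge p$) are assembled; I would verify that no $\Psi$-type monomials survive, which is expected since $\omega_I$ lies in $\bR\omega_0+\bR\tilde\omega_0$.

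For $\omega_J$ and $\omega_K$, the subtlety is that they are only $\SU(3)$-invariant, so they do not descend to $M$ as global forms; instead I would work on $N=\SO(7)/\SU(3)$ with $\hat\omega_J$, $\hat\omega_K$ defined by $\hat\omega_J+i\hat\omega_K=\cyc((p+ip')\wedge(a+ia'))$. Using the chosen local section $s$ with $s^*e=0$, the pullback $\omega_J=s^*\hat\omega_J$ has $d\omega_J=s^*d\hat\omega_J$, and at the base point the terms in $d\hat\omega_J$ involving $de$ drop out after applying $s^*$ because $s^*e=0$; what remains is captured by $d_M$ acting on the $a,a',p,p'$ entries. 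So I would compute $d\hat\omega_J+i\,d\hat\omega_K=\cyc\,d\bigl((p+ip')\wedge(a+ia')\bigr)$ by the Leibniz rule, substitute the structure equations, and then take real and imaginary parts.

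The main obstacle will be the bookkeeping: the cyclic operator $\cyc$ runs over the $3$-cycle $(a,a',p,p')\to(b,b',q,q')\to(c,c',r,r')$, and each of $da$, $dp$ (and their cyclic images) contributes several wedge monomials, so the expansions of $d(a'\wedge a)$ and of $d((p+ip')\wedge(a+ia'))$ produce many terms that must be regrouped into the patterns defining $\Phi$ and $\Psi$. I expect significant cancellation to occur: the claimed result isolates exactly two invariant three-forms $\Phi$ (the $\Lambda^3_0ES^3H$-type, mixing one $p,q,r$-index with two $a,b,c$-indices) and $\Psi$ (the $\Lambda^3_0EH$-type, purely in $p,q,r$), and the coefficients must be consistent across the cyclic sum. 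I would organise the calculation by separating, for each form, the contribution of the $\lambda$- and $\mu$-weighted pieces and tracking the characteristic sign patterns ($-a'\wedge b'\wedge r$ versus $+a\wedge b\wedge r$, and $-3p\wedge q'\wedge r'$ versus $+p\wedge q\wedge r$) to confirm they assemble into $\Phi$ and $\Psi$ with the stated coefficients.

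Finally, I would apply the almost complex structures to these three-forms using $Ia=-a(I\cdot)$ and the base-point relations $Ja=\mu p$, $Jp=-\lambda a$ (with $\lambda\mu=1$), etc., to obtain $Id\omega_I$, $Jd\omega_J$ and $Kd\omega_K$; the identity $A\,d\omega_A(\cdot,\cdot,\cdot)=-d\omega_A(A\cdot,A\cdot,A\cdot)$ is just the extension of the action to three-forms, and checking it reduces to confirming the sign from acting by $A$ on each of the three slots. Here I would exploit that $\Phi$ and $\Psi$ are built from symmetric index patterns so that the $A$-action permutes primed and unprimed entries in a controlled way, yielding the stated multiples of $\Phi$ and $\Psi$.
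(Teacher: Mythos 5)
Your plan is essentially the paper's own proof: \( \omega_I \) is handled by direct expansion with the \( d_M \) structure equations, and \( \omega_J \), \( \omega_K \) by pulling back \( d_N\hat\omega_J + i\,d_N\hat\omega_K \) along the section with \( s^*e = 0 \), which reduces everything to \( d_M \) at the base point before applying \( I \), \( J \), \( K \). The only slip is that what \( s^* \) kills are the terms containing the one-form \( e \) as a wedge factor (arising from \( d_Na \), \( d_Np \), etc.), not terms in \( de \); otherwise the bookkeeping you describe is exactly what the paper carries out.
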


\begin{proof}
  As \( \omega_I \) is \( \Un(3) \)-invariant we have \( Id\omega_I =
  Id_M\omega_I \) which equals
  \begin{equation*}
    (2\lambda-\tfrac12\mu)I\cyc (a\wedge b' \wedge r + a'\wedge b
    \wedge r - a \wedge b \wedge r' + a' \wedge b' \wedge r')
  \end{equation*}
  and gives the first claimed formula valid at any point of~\( M \).

  For our choice of section~\( s \), we have at \( \Id\Un(3) \) that
  \( Jd\omega_J = J s^*d_N\tilde\omega_J = Jd_M\tilde\omega_J \) which
  is
  \begin{equation*}
    J\cyc\Bigl(-\tfrac12 a\wedge b\wedge c + \tfrac32 a \wedge
    b'\wedge c' +2(a\wedge q\wedge r - a\wedge q'\wedge
    r' + a'\wedge q\wedge r' +
    a'\wedge q'\wedge r)\Bigr).
  \end{equation*}
  Combined with the description of~\( J \), we thus get the claimed
  formula.  The computation for \( Kd\omega_K \) is similar.
\end{proof}

To compute the intrinsic torsion we use the \enquote{minimal
description} of \cite{Cabrera-S:aqH-torsion} which relies on computing
the forms \( \beta_I = Jd\omega_J + Kd\omega_K \), etc., and the
contractions \( \Lambda_A\beta_B \) of \( \beta_B \) with \( \omega_A
\).  For our structures, we have at the base point
\begin{equation*}
  \beta_I = 4\lambda\Phi,\quad
  \beta_J = \tfrac12(\mu\Phi + \mu^3\Psi),\quad \beta_K =
  \tfrac12(\mu\Phi - \mu^3\Psi)
\end{equation*}
and all contractions \( \Lambda_A\beta_B = 0 \).  This confirms that
the intrinsic torsion~\( \xi \) has no components in \( \real{E(S^3H+H)} \).  

\begin{theorem}
  \label{thm:xi}
  The component of \( \xi \) in \( \real{\Lambda^3_0ES^3H} \) is always
  non-zero, so the almost quaternion\hyphen Hermitian is never
  quaternionic.  The component of \( \xi \) in \( \real{\Lambda^3_0EH} \) is
  zero if and only if \( 2\lambda = \mu \).
\end{theorem}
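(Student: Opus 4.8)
The plan is to extract the two invariant components of $\xi$ from the forms $\beta_I,\beta_J,\beta_K$ already computed at the base point $\Id\Un(3)$, since the contractions vanish so the whole intrinsic torsion lives in $\real{\Lambda^3_0E(S^3H+H)}$. By \eqref{eq:invxi} this space is only two-dimensional, spanned by $\real{\Lambda^3_0ES^3H}^{\Un(3)}$ and $\real{\Lambda^3_0EH}^{\Un(3)}$, so I need exactly two scalar coefficients as functions of $\lambda,\mu$ (subject to $\lambda\mu=1$). The natural strategy is to build two reference invariant tensors, one lying in each summand, and expand $\xi$ — equivalently the data $\beta_I,\beta_J,\beta_K$ — against them.

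First I would recall that $\Lambda^3_0E$ decomposes as $(L^{3/2}+L^{-3/2})+(L^{1/2}\sigma^{2,0}+L^{-1/2}\sigma^{0,2})$, the first summand being a copy of $H$. The $\Phi$ and $\Psi$ of Lemma~\ref{lem:d-om} are the concrete carriers of these pieces: I expect $\Psi$, built purely from $p,q,r,p',q',r'$ (i.e.\ from $\rcomp{L^2\lambda^{0,1}}$), to pair with the $\rcomp{L^3}\real{\lambda^{1,1}_0}$-type data and detect the $S^3H$ component, while $\Phi$, mixing $a,b$ with $r$, carries the $H$-type component. So I would second, using the explicit action of $I,J,K$ on one-forms ($Ja=\mu p$, $Jp=-\lambda a$, etc.) together with the combinations $\beta_I=4\lambda\Phi$ and $\beta_J\pm\beta_K=\mu\Phi,\ \mu^3\Psi$, read off how $\Phi$ and $\Psi$ enter. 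The cleanest route is to apply the projection maps onto $\real{\Lambda^3_0ES^3H}$ and $\real{\Lambda^3_0EH}$ directly to $\xi$, computed from $\xi_X Y=-\tfrac14\sum_A A(\LC_X A)Y$ via the given $\beta$'s; the coefficient of the $S^3H$-part should come out proportional to $\mu^3$ (from $\Psi$) and hence never zero, while the $H$-part should be an affine combination of $\lambda$ and $\mu$.

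For the second claim I would isolate the $\real{\Lambda^3_0EH}^{\Un(3)}$-coefficient. Since the $H\subset\Lambda^3_0E$ summand and the $H$ factor both contribute $\Phi$-type terms, the relevant coefficient is a linear expression in $\lambda$ and $\mu$; the three scalars $\tfrac12\mu-2\lambda$ (from $Id\omega_I$), $2\lambda$, $\tfrac12\mu$ appearing in Lemma~\ref{lem:d-om} are precisely what feed into it. I expect the $H$-component to be proportional to $2\lambda-\mu$ (or $\mu-2\lambda$), vanishing exactly when $2\lambda=\mu$, which matches the stated criterion. Concretely I would combine $Id\omega_I=(\tfrac12\mu-2\lambda)\Phi$ with the $\Phi$-parts of $\beta_J,\beta_K$ and check that the unique $\Un(3)$-invariant projection to $\real{\Lambda^3_0EH}$ kills $\Psi$ and returns a multiple of $(2\lambda-\mu)$.

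The main obstacle will be pinning down the two projection operators onto $\real{\Lambda^3_0ES^3H}$ and $\real{\Lambda^3_0EH}$ inside $\cQ$ explicitly enough to evaluate them on $\Phi$ and $\Psi$, i.e.\ correctly identifying which linear combinations of the $\beta$-data represent each irreducible summand rather than the discarded $\real{E(S^3H+H)}$ piece. Once the normalisation $\lambda\mu=1$ is imposed and the $\Psi$-coefficient is confirmed to be a nonzero multiple of $\mu^3$, the first assertion is immediate; the delicate bookkeeping is entirely in separating the two $\Phi$-sources so that the $H$-coefficient emerges cleanly as a multiple of $2\lambda-\mu$.
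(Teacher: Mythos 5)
There is a genuine gap, and it sits exactly where your first (and main) claim needs support. You assert that the \( \real{\Lambda^3_0ES^3H} \)-component is detected by \( \Psi \) and comes out \enquote{proportional to \( \mu^3 \) (from \( \Psi \)) and hence never zero}. That is not how the computation goes, and it cannot be: \( \Psi = \cyc(p\wedge q\wedge r - 3p\wedge q'\wedge r') \) is (up to a factor) the real part of \( (p+ip')\wedge(q+iq')\wedge(r+ir') \) and so lies in \( \rcomp{L^6} \subset \Lambda^3T^* \); it is only \( \SU(3) \)-invariant, not \( \Un(3) \)-invariant, whereas the invariant component of \( \xi \) must sit in the \( \Un(1) \)-weight-zero part. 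What the paper actually uses is the minimal description of \cite{Cabrera-S:aqH-torsion}: once all contractions \( \Lambda_A\beta_B \) vanish, the \( \Lambda^3_0ES^3H \)-part is represented by \( \psi^{(3)} = \tfrac1{12}(\beta_I+\beta_J+\beta_K) \), and in this sum the \( \Psi \)-terms of \( \beta_J \) and \( \beta_K \) cancel, leaving \( \tfrac1{12}(4\lambda+\mu)\Phi \). Non-vanishing therefore comes from \( 4\lambda+\mu \neq 0 \) under \( \lambda\mu = 1 \) (the parameters have the same sign), not from \( \mu^3 \neq 0 \). Carried out as stated, your plan would either project onto the wrong weight space or discover that the \( \Psi \)-coefficient of the relevant projection is zero, leaving you with no argument for the first assertion.

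For the second claim you correctly anticipate that the \( \real{\Lambda^3_0EH} \)-coefficient is linear in \( \lambda,\mu \) and vanishes exactly when \( 2\lambda = \mu \), but you arrive at this by matching the stated criterion rather than by computing it: the missing ingredient is the explicit formula \( \psi^{(3)}_I = \tfrac18\bigl(-\beta_I + 2(3+\mathcal L_I)\psi^{(3)}\bigr) \) with \( \mathcal L_I\Phi = \Phi \), which yields \( \tfrac1{12}(\mu-2\lambda)\Phi \). You yourself flag \enquote{pinning down the two projection operators} as the main obstacle --- that is in fact the entire content of the proof, supplied in the paper by citation to \cite{Cabrera-S:aqH-torsion}, and without it the argument is incomplete. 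Note finally that both invariant components end up as multiples of the \emph{same} form \( \Phi \), so your organising principle \enquote{\( \Psi \) carries \( S^3H \), \( \Phi \) carries \( H \)} is the wrong dichotomy.
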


\begin{proof}
  Since we have shown in \S\ref{sec:intrinsic-torsion} that \( \xi \)
  has no component in \( \real{K(S^3H + H)} \) and we saw above that
  each one form \( \Lambda_A\beta_B \) is zero, at the base point, the
  results of \cite{Cabrera-S:aqH-torsion} show that the \(
  \Lambda^3_0ES^3H \)\hyphen component of \( \xi \) corresponds to
  \begin{equation*}
    \psi^{(3)} \coloneqq \tfrac1{12}(\beta_I+\beta_J+\beta_K) =
    \tfrac1{12}(4\lambda+\mu)\Phi 
  \end{equation*}
  which is always non-zero under condition~\eqref{eq:norm}.  The
  component in \( \Lambda^3_0EH \) is determined by
  \begin{equation*}
    \psi^{(3)}_I \coloneqq \tfrac1{8}(-\beta_I + 2(3 + \mathcal
    L_I)\psi^{(3)}), 
  \end{equation*}
  where \( \mathcal L_I = I_{(12)} + I_{(13)} + I_{(23)} \), with \(
  I_{(12)}\alpha = \alpha(I\cdot,I\cdot,\cdot) \), etc.  Now \(
  \mathcal L_I\Phi = \Phi \), so
  \begin{equation*}
    \psi^{(3)}_I = \tfrac1{12}(\mu-2\lambda)\Phi
  \end{equation*}
  and the result follows.
\end{proof}

\begin{corollary}
  \label{cor:quaternionic}
  The invariant almost quaternion-Hermitian structures on~\( M \) are
  not quaternionic integrable, and their quaternionic twistor spaces
  are not complex.
\end{corollary}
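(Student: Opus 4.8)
The plan is to reduce both assertions to the single statement that these structures are never \emph{quaternionic}, i.e.\ that the underlying almost quaternionic structure admits no torsion-free $\GL(3,\bH)\Sp(1)$-connection. Indeed, by Salamon's twistor theorem the natural almost complex structure on the twistor space of an almost quaternionic manifold of dimension $4n\ge 8$ is integrable precisely when the manifold is quaternionic; here $\dim M = 12$, so $n=3$ and the theorem applies. Thus the second claim follows from the first, and it suffices to rule out quaternionic integrability.

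Next I would identify which part of the intrinsic torsion obstructs quaternionic integrability. Passing from the metric group $\Sp(3)\Sp(1)$ to the larger $\GL(3,\bH)\Sp(1)$ enlarges the admissible connections, so some components of $\xi$ become absorbable; the quaternionic intrinsic torsion is the quotient of $\cQ$ by the image, under the antisymmetrisation that computes torsion, of $T^*\otimes\mathfrak{z}$, where $\mathfrak{z}$ complexifies to $\Lambda^2_0E+\bR$, the part of $\gl(3,\bH)$ beyond $\sP(3)=\real{S^2E}$. Since $\mathfrak{z}$ acts only on the $E$-factor of $T=EH$, the source $T^*\otimes\mathfrak{z}\cong (E\otimes(\Lambda^2_0E+\bR))\otimes H$ is purely of $H$-type as an $\Sp(1)$-module. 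By $\Sp(3)\Sp(1)$-equivariance of the antisymmetrisation map its image is again of $H$-type, so it meets none of the $S^3H$-summands of $\cQ\otimes\bC=(\Lambda^3_0E+K+E)(S^3H+H)$. Hence every $S^3H$-component of $\xi$ is a genuine obstruction, and in particular a quaternionic structure must have vanishing $\real{\Lambda^3_0ES^3H}$-component; this is the correspondence recorded in~\cite{Cabrera-S:aqH-torsion}.

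The conclusion is then immediate. By Theorem~\ref{thm:xi} the $\real{\Lambda^3_0ES^3H}$-component of $\xi$ is non-zero for every member of the family, so none of the structures is quaternionic; and by the previous paragraph, together with Salamon's theorem, their twistor spaces therefore carry non-integrable almost complex structures.

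I expect the main obstacle to be the second paragraph: justifying that the $\real{\Lambda^3_0ES^3H}$-component cannot be absorbed by the extra $\GL(3,\bH)$-gauge freedom. The equivariance argument settles this at the level of $\Sp(1)$-types, but making it fully rigorous requires either the explicit decomposition of the antisymmetrisation map on $T^*\otimes\mathfrak{z}$ or an appeal to the precise classification of quaternionic versus almost quaternion\hyphen Hermitian intrinsic torsion in~\cite{Cabrera-S:aqH-torsion}. Once that correspondence is in hand, the corollary is a one-line consequence of Theorem~\ref{thm:xi}.
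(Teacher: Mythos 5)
Your proposal is correct and follows essentially the same route as the paper: both reduce the corollary to Salamon's two facts (twistor space complex iff quaternionic, and quaternionic iff the intrinsic torsion has no \( S^3H \)-component) and then invoke the non-vanishing of the \( \real{\Lambda^3_0ES^3H} \)-part from Theorem~\ref{thm:xi}. The only difference is that you sketch a representation-theoretic justification of the first fact (the \( H \)-type argument for the image of \( T^*\otimes\mathfrak z \)) where the paper simply cites \cite{Salamon:quaternionic}; that sketch is sound.
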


\begin{proof}
  This follows directly from the following two facts
  \cite{Salamon:quaternionic}:
  \begin{inparaenum}
  \item The underlying quaternionic structure is integrable if and
    only if the intrinsic torsion \( \xi \) has no \( S^3H \)
    component, i.e. it lies in \( (\Lambda^3_0E+K+E)H \).
  \item The quaternionic twistor space is complex if and only if the
    underlying quaternionic structure is integrable.
  \end{inparaenum}
  But we have shown the \( \Lambda^3_0ES^3H \)-component of \( \xi \)
  is non-zero, so the result follows.
\end{proof}

The almost Hermitian structure \( (g,\omega_I) \) is easily seen to be
integrable: \( d_M(a+ia') = -(b-ib') \wedge (r+ir') +
(c-ic')\wedge(q+iq') \in \Lambda^{1,1}_I \), \( d_M(p+ip') =
-\tfrac12(b+ib')\wedge(c+ic') \in \Lambda^{2,0}_I \).  In addition,
from Lemma~\ref{lem:d-om}, we see that \( d\omega_I \) is orthogonal to \(
\omega_I\wedge\Lambda^1 \).  It follows that \( d\omega_I \) is
primitive.  

Now recall that Gray and Hervella \cite{Gray-H:16}, showed that the
intrinsic torsion of an almost Hermitian structure \( (g,\omega) \)
lies in
\begin{equation*}
  \mathcal W = \mathcal W_1 + \mathcal W_2 + \mathcal W_3 + \mathcal
  W_4 = \rcomp{\Lambda^{3,0}} + \rcomp{U^{3,0}} +
  \rcomp{\Lambda^{2,1}_0} + \rcomp{\Lambda^{1,0}},
\end{equation*}
with \( U^{3,0} \) irreducible: the \( \mathcal W_1 + \mathcal W_2
\)-part is determined by the Nijenhuis tensor; the \( \mathcal W_1 +
\mathcal W_3 + \mathcal W_4 \)-part by \( d\omega \).  We now have
from Lemma~\ref{lem:d-om}:

\begin{proposition}
  The Hermitian structure \( (g,\omega_I,I) \) is of Gray-Hervella
  type \( \mathcal W_3 \), except when \( 4\lambda=\mu \),
  when it is Kähler.  Furthermore, the Kähler metric is symmetric.
\end{proposition}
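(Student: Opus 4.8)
The plan is to read off the Gray--Hervella type directly from the two observations recorded immediately before the statement, together with Lemma~\ref{lem:d-om}. First I would use the integrability of~\( I \): the identities \( d_M(a+ia')\in\Lambda^{1,1}_I \) and \( d_M(p+ip')\in\Lambda^{2,0}_I \) show that the Nijenhuis tensor vanishes, so the \( \mathcal W_1+\mathcal W_2 \) part of the intrinsic torsion is zero and the torsion lies in \( \mathcal W_3+\mathcal W_4 \). Next, recall that \( \mathcal W_4\cong\rcomp{\Lambda^{1,0}} \) is the Lee-form class, whose contribution to \( d\omega \) is exactly its component in \( \omega_I\wedge\Lambda^1 \). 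Since \( d\omega_I \) was shown to be primitive, this component vanishes, leaving only a possible \( \mathcal W_3 \) part. Thus the structure is of type \( \mathcal W_3 \) precisely when \( d\omega_I\neq0 \), and it is Kähler (type~\( 0 \)) when \( d\omega_I=0 \).

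Second, I would pin down the Kähler locus from Lemma~\ref{lem:d-om}. There \( Id\omega_I=(\tfrac12\mu-2\lambda)\Phi \), and since the operation \( \alpha\mapsto I\alpha \) is invertible and \( \Phi\neq0 \), we get \( d\omega_I=0 \) if and only if \( \tfrac12\mu-2\lambda=0 \), i.e.\ \( 4\lambda=\mu \). For every other member of the normalised family \( \lambda\mu=1 \) the form \( d\omega_I \) is a non-zero primitive \( (2,1)+(1,2) \)-form, so the structure is strictly of type~\( \mathcal W_3 \). When \( 4\lambda=\mu \) the conditions \( N=0 \) and \( d\omega_I=0 \) hold together, which is the definition of Kähler; intersecting \( 4\lambda=\mu \) with~\eqref{eq:norm} singles out \( \lambda=\tfrac12 \), \( \mu=2 \).

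For the final assertion I would identify \( M \), equipped with the complex structure~\( I \) and the induced \( \SO(7) \)-action, with the complex quadric \( \Gro_2(\bR^6)=\SO(8)/{\SO(2)\SO(6)} \), via the classical description of \( \SO(7)/\Un(3) \) as the Riemannian twistor space of~\( S^6 \) and of that twistor space as the quadric. The symmetric Kähler metric of the quadric is \( \SO(8) \)-invariant, hence \( \SO(7) \)-invariant, and Kähler with respect to~\( I \). As noted after~\eqref{eq:norm}, every \( \SO(7) \)-invariant metric of normalised volume is a member of the family~\eqref{eq:g} subject to~\eqref{eq:norm}; combined with the previous paragraph, the only such metric that is Kähler is the one with \( 4\lambda=\mu \). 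Hence, after volume normalisation, the symmetric metric must coincide with our Kähler metric, so the latter is symmetric.

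I expect this last step to be the main obstacle. The Gray--Hervella classification is essentially forced by the integrability and primitivity already in hand, whereas symmetry requires exhibiting the enlarged isometry group \( \SO(8)\supset\SO(7) \): one must make the biholomorphism \( M\cong\Gro_2(\bR^6) \) explicit and check that it intertwines the \( \SO(7) \)-actions and carries~\( I \) to the quadric's complex structure, so that the \( \SO(8) \)-invariance of the symmetric metric genuinely descends to an identification with the member \( \lambda=\tfrac12 \) of the family~\eqref{eq:g}.
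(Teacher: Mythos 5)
Your proof follows the paper's own argument essentially verbatim: the type-\( \mathcal W_3 \)/Kähler dichotomy is read off from the integrability of \( I \), the primitivity of \( d\omega_I \) and the formula \( Id\omega_I = (\tfrac12\mu - 2\lambda)\Phi \) of Lemma~\ref{lem:d-om}, while the symmetry of the Kähler metric is obtained by identifying \( M \) with the quadric \( \SO(8)/{\SO(2)\SO(6)} \) and invoking uniqueness of the normalised \( \SO(7) \)-invariant Kähler metric. The paper makes the identification with the quadric no more explicit than you do — isotropy irreducibility of the quadric plus that uniqueness is exactly its argument — so the extra verification you flag as the main obstacle is not pursued there either.
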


Note that the Kähler parameters do not correspond to the parameters in
Theorem~\ref{thm:xi} that give \( \xi \in \real{\Lambda^3_0ES^3H} \).

\begin{proof}
  It remains to prove the last assertion.  As in \cite{Salamon:tour},
  note that \( \SO(7)/\Un(3) \allowbreak \cong \SO(8)/\Un(6) \cong
  \SO(8)/\SO(2)\SO(6) \), which is the quadric.  The latter is
  isotropy irreducible and carries a unique \( \SO(8) \)-invariant
  metric with fixed volume, which is Hermitian symmetric so Kähler.
  However, we have seen that there is a unique Kähler metric with the
  same volume invariant under the smaller group~\( \SO(7) \), so these
  Kähler metrics must agree.
\end{proof}

\begin{remark}
  Each \( \SO(7) \)-invariant metric~\( g \) on~\( M \) is given
  by~\eqref{eq:g} and so is a Riemannian submersion over~\( \CP(3) \)
  with fibre~\( S^6 \).  The standard theory of the canonical
  variation~\cite{Besse:Einstein} tell us that precisely two of
  these metrics are Einstein.  One is the symmetric case \( 4\lambda =
  \mu \).  The other is when \( 8\lambda = 3\mu \), as verified by
  Musso~\cite{Musso:twistor} in slightly different notation.  Again
  these particular parameters are not those for which \( \xi \) is
  special.
\end{remark}

\begin{remark}
  It can be shown that the local almost Hermitian structures \(
  (g,\omega_J,J) \) and \( (g,\omega_K,K) \) above are each of strict
  Gray-Hervella type \( \mathcal W_1 + \mathcal W_3 \) at the base
  point, unless \( 4\lambda = 3\mu \), when they have type \( \mathcal
  W_1 \).  In particular, the Nijenhuis tensors \( N_J \) and \( N_K
  \) are skew-symmetric at the base point and equal to \(
  \tfrac16(4\lambda+\mu)(3\Phi \mp \mu^2\Psi) \) at~\( \Id\Un(3) \).
  In \cite{Cabrera-S:aqH-torsion} we showed how \( N_I \) is
  determined by \( Jd\omega_J - Kd\omega_K \).  In this case, we have
  the interesting situation that this latter tensor is non-zero, even
  though \( N_I \) vanishes.  Using \cite{Alekseevsky-M:subordinated},
  one can prove that the obstruction to quaternionic integrability is
  proportional to \( N_I+N_J+N_K = (4\lambda+\mu)\Phi \), confirming
  that this is non-zero and the results of
  Corollary~\ref{cor:quaternionic}.
\end{remark}

\providecommand{\bysame}{\leavevmode\hbox to3em{\hrulefill}\thinspace}
\providecommand{\MR}{\relax\ifhmode\unskip\space\fi MR }
\providecommand{\MRhref}[2]{%
  \href{http://www.ams.org/mathscinet-getitem?mr=#1}{#2}
}
\providecommand{\href}[2]{#2}

\end{document}